\pgfplotsset{compat=1.9}
\renewcommand{\leq}{\leqslant}
\renewcommand{\geq}{\geqslant}
\pgfplotsset{compat=1.13}
\pgfplotsset{
  /pgfplots/error bar legend 1/.style={
    legend image code/.code={
        \draw[sharp plot,mark=-,mark repeat=2,mark phase=1,color=red,dotted,##1]
        plot coordinates { (0.3cm, -0.15cm) (0.3cm,0cm) (0.3cm, 0.15cm) };%
        \draw[mark repeat=2,mark phase=2,##1]
        plot coordinates {(0cm,0cm) (0.3cm,0cm) (0.6cm,0cm)};%
        }}}
        \pgfplotsset{
  /pgfplots/error bar legend 2/.style={
    legend image code/.code={
        \draw[sharp plot,mark=-,mark repeat=2,mark phase=1,color=blue,solid,##1]
        plot coordinates { (0.3cm, -0.15cm) (0.3cm,0cm) (0.3cm, 0.15cm) };%
        \draw[mark repeat=2,mark phase=2,##1]
        plot coordinates {(0cm,0cm) (0.3cm,0cm) (0.6cm,0cm)};%
        }}}
        \pgfplotsset{
  /pgfplots/error bar legend 3/.style={
    legend image code/.code={
        \draw[sharp plot,mark=-,mark repeat=2,mark phase=1,color=black,dashed,##1]
        plot coordinates { (0.3cm, -0.15cm) (0.3cm,0cm) (0.3cm, 0.15cm) };%
        \draw[mark repeat=2,mark phase=2,##1]
        plot coordinates {(0cm,0cm) (0.3cm,0cm) (0.6cm,0cm)};%
        }}}
        \newcommand{\RR}{{\mathbb{R}}}
\DeclareMathOperator*{\argmin}{argmin}
\newtheorem{remark}[theorem]{Remark}
\newtheorem{num_example}[theorem]{Example}
\begin{document}
\bibliographystyle{siam}
\pagestyle{myheadings}
\markboth{D. Palitta and V.~Simoncini}{Optimality properties of algebraic (Petrov-)Galerkin methods}

\title{Optimality properties of Galerkin and
Petrov-Galerkin methods for linear matrix equations%
\thanks{Version of November 14, 2019}}
\author{Davide Palitta\thanks{Research Group Computational Methods in Systems
and Control Theory (CSC),
Max Planck Institute for Dynamics of Complex Technical Systems, Sandtorstra\ss{e} 1, 39106 Magdeburg, Germany. \texttt{palitta@mpi-magdeburg.mpg.de}} \and Valeria Simoncini\thanks{Dipartimento di Matematica,
Alma Mater Studiorum Universit\`a di Bologna,
Piazza di Porta San Donato  5, I-40127 Bologna, Italy,
 and IMATI-CNR, Pavia, Italy. {\tt valeria.simoncini@unibo.it}}}
\maketitle

\begin{center}{\it Dedicated to Volker Mehrmann on the occasion of his 65th birthday}\end{center}

\begin{abstract}
Galerkin and Petrov-Galerkin methods are some of the most successful solution procedures in numerical analysis. 
Their popularity is mainly due to the optimality properties of their approximate solution.
We show that these features carry over to the (Petrov-)Galerkin methods applied for
 the solution of linear matrix equations.

Some novel considerations about the use of Galerkin and Petrov-Galerkin schemes in the numerical treatment of general linear matrix equations are expounded and the use of constrained minimization techniques in 
the Petrov-Galerkin framework is proposed. 

\end{abstract}

\begin{keywords}
Linear matrix equations. Large scale equations. Sylvester equation.
\end{keywords}

\begin{AMS}
65F10, 65F30, 15A06
\end{AMS}

\section{Introduction}

Many state-of-the-art solution procedures for 
 algebraic linear systems of the form
\begin{equation}\label{eqn:mainkron}
{\cal M} x = f,
\end{equation}
where ${\cal M}\in\RR^{N\times N}$ and $f\in\RR^N$, 
are based on projection. Given a subspace
${\cal K}_m$ of dimension $m$, and a matrix ${\cal V}_m$ whose orthonormal columns span
${\cal K}_m$, these methods seek an approximate solution $x_m = {\cal V}_m y_m$ for some
$y_m\in\RR^m$ by imposing certain conditions. The most successful projection procedures impose either a 
{\it Galerkin} or a {\it Petrov-Galerkin} condition on the residual $r_m =  f - {\cal M} x_m$. See, e.g., \cite{Saad2003}.
These conditions are very general, and they are at the basis of many approximation methods, beyond
the algebraic context of interest here; any approximation strategy associated with an inner product
can determine the projected solution by one of such a condition. Finite element methods,
both at the continuous and discrete levels, strongly 
rely on this methodology; see, e.g., \cite{Strang.Fix.73}, but also eigenvalue problems \cite{Saad1992}.

It is very important to realize that this is a methodology, not a single method: the approximation
space can be generated independently of the condition, and in a way to make the computation
of $y_m$ more effective, while obtaining a sufficiently accurate approximation with the smallest possible
space dimension.

A fundamental property of the Galerkin methodology is obtained whenever the coefficient matrix 
${\cal M}$ is
symmetric and positive definite (spd): the Galerkin condition  on the residual corresponds to
minimizing the error vector in the norm associated with ${\cal M}$ over the approximation space. This property is
at the basis of the convergence analysis of methods such as the Conjugate Gradient (CG) \cite{Hestenes1952},
and it ensures monotonic convergence, in addition to finite termination, in exact
precision arithmetic.

When $\mathcal{M}$ is not spd, the application of the Galerkin method does not automatically imply a minimization of the error norm. Nevertheless, a certain family of Petrov-Galerkin procedures still fulfills an optimality property. Indeed, these methods minimize the residual norm over the space $\mathcal{MK}_m$. See, e.g., \cite{Saad2003}. Some of the most popular solvers for linear systems such as MINRES \cite{Paige1975} and GMRES \cite{Schultz1986} belong to this collection of methods.

In the past decades, projection techniques have been successfully used to solve linear matrix 
equations of the form
\begin{equation} \label{eqn:matrixeq}
A_1 X B_1 + A_2 X B_2 + \ldots + A_\ell X B_\ell = F,
\end{equation}
that have arisen as a natural algebraic model for discretized partial differential equations (PDEs),
possibly including stochastic terms or parameter dependent coefficient matrices 
\cite{Baumann2018,Benner.Damm.11,Powell.Silvester.Simoncini.17,Palitta2016}, for PDE-constrained optimization problems \cite{Stoll2015}, data assimilation \cite{Freitag2018}, and many other applied contexts, including
building blocks of other numerical procedures \cite{Lin2013}; see also \cite{Simoncini2014,Benner2013} for further
references.

The general matrix equation (\ref{eqn:matrixeq}) covers two well known cases, the (generalized)
Sylvester equation (for $\ell = 2$), and the Lyapunov equation
\begin{eqnarray}\label{eqn:Lyap}
A X  +  X A^T  = F,
\end{eqnarray}
which plays a crucial role in many applications such as control and system theory \cite{Benner2005,Antoulas.05}, and in the solution of Riccati equations by the Newton method, in which
 a Lyapunov equation needs to be solved at each Newton step. See, e.g., \cite{Mehrmann1991}.

The aim of this paper is to generalize the optimality properties of the Galerkin and Petrov-Galerkin methods to matrix equations,
and to extend other convergence properties of CG and some related schemes to the matrix setting.
Some of the proposed results are new, some others can be found in articles scattered in the
literature in different contexts. We thus provide a more uniform presentation of these results.

To introduce a matrix version of the error and residual minimization, we first
recall the relation between matrix-matrix operations and
Kronecker products. Indeed, if $\otimes$ denotes the Kronecker product and ${\cal T} := B^T \otimes A$, then
$$
Y = A X B \quad \Leftrightarrow \quad  y={\cal T} x, \quad x = {\rm vec}(X),\; y={\rm vec}(Y) ,
$$
where the usual ``vec($\cdot$)'' operator stacks the columns of the argument matrix one
after the other into a long vector.

\section{The Galerkin condition}
In this section we first recall the result connecting the Galerkin condition on the
residual with the minimization of the error norm when this is applied to the solution of linear systems,
 and then we show that similar results can be obtained also in the matrix equation setting. For the rest of the section
we assume that ${\cal M}$ in \eqref{eqn:mainkron} is symmetric and positive definite.


\subsection{The linear system setting}
Let $x_m=V_m y_m$ be an approximation to the true solution of (\ref{eqn:mainkron}), and
let $e_m = x - x_m$, $r_m = f - {\cal M} x_m$ be the associated error and residual, respectively.
We recall that imposing the Galerkin condition yields
\begin{eqnarray}\label{eqn:gal_cond}
V_m^T r_m =0 \quad \Leftrightarrow \quad
V_m^T {\cal M} V_m y_m = V_m^T f.
\end{eqnarray}
Note that the coefficient matrix $V_m^T {\cal M} V_m$ is symmetric and positive definite.
Solving this system  yields the ``projected'' vector $y_m$, so as to completely 
define $x_m$.

Let $\|e_m\|_{\cal M}^2 : = e_m^T {\cal M} e_m$ be the ${\cal M}$-norm associated with
the spd matrix ${\cal M}$.
For the error we thus have
\begin{eqnarray}\label{eqn:err_M}
\|e_m\|_{\cal M}^2 = \| {\cal M}^{1/2}(x-x_m)\|^2 = \|{\cal M}^{1/2}x - {\cal M}^{1/2} V_m y_m\|^2.
\end{eqnarray}
The minimization of the error ${\cal M}$-norm thus corresponds to solving the least
squares problem on the right, which gives
$$
({\cal M}^{1/2} V_m)^T {\cal M}^{1/2} V_m y_m = ({\cal M}^{1/2} V_m)^T {\cal M}^{1/2}x,
$$
which, upon simplifications of the transpositions yields $V_m^T {\cal M} V_m y_m = V_m^T f$,
that is, using (\ref{eqn:gal_cond}), $V_m^T r_m =0$.

\subsection{Galerkin method and error minimization for matrix equations}\label{sec:galerkin_ME}
To simplify the presentation, we first discuss Galerkin projection with the Lyapunov equation.
Given the equation~(\ref{eqn:Lyap}) with $A$ spd and $F=F^T$, then it can be shown
that $X$ is symmetric. Letting ${\rm range}(V_k)$  be an approximation space,
we can determine an approximation to $X$ as $X_k = V_k Y_k V_k^T$, which in vector
notation is written as $\text{vec}(X_k) = (V_k\otimes V_k) {\rm vec}(Y_k)$. The matrix $Y_k$ is
obtained by imposing the Galerkin condition in a matrix sense to the
residual matrix $R_k = F-(AX_k+X_kA)$, that is
$$
V_k^T R_k V_k = 0 \quad \Leftrightarrow \quad
(V_k\otimes V_k)^T r_k = 0,
$$
where $r_k = {\rm vec}(R_k)$. Therefore, if one writes the Lyapunov equation 
 by means of the Kronecker formulation, the obtained approximation space is
${\cal K}_m={\rm range}(V_k\otimes V_k)$. 

We explicitly  notice that $X_k$ belongs to ${\rm range}(V_k)$,
which is much smaller than range$(V_k\otimes V_k)$. Therefore, by sticking to the matrix
equation formulation, we expect to build a much smaller approximation space than
if a blind use of the Kronecker form were used. In other words, by exploiting the
original matrix structure, no redundant information is sought after. In section
section~\ref{Comparisons with the Kronecker formulation} we provide a rigorous
analysis of this argument. See also \cite{Kressner.Tobler.10}.
To be able to exploit the derivation in (\ref{eqn:err_M})
we will define an error matrix and the associated inner product.

The generalization to the multiterm linear equation (\ref{eqn:matrixeq}) requires
the definition of two approximation spaces, since the right and left coefficient
matrices are not necessarily the same. Therefore, let range$(V_k)$ and range$(W_k)$ be
two approximation spaces of dimension $k$ each\footnote{In principle, we can have $\text{dim}(\text{range}(V_k))\neq\text{dim}(\text{range}(W_k))$. Here we consider $\text{dim}(\text{range}(V_k))=\text{dim}(\text{range}(W_k))=k$ 
for the sake of simplicity in the presentation.}, and let us 
write the approximation to $X$ as $X_k  = V_k Y_k W_k^T$.
With the residual matrix $R_k = F - \sum_{j=1}^\ell A_j X_k B_j$, the Galerkin condition now takes the form
$$
V_k^T R_k W_k = 0 \quad \Leftrightarrow \quad
(W_k\otimes V_k)^T r_k = 0,
$$
where $r_k = {\rm vec}(R_k)$, so that
${\cal K}_m={\rm range}(W_k\otimes V_k)$ with $m=k^2$ in the Kronecker formulation.

To adapt the error minimization procedure to the matrix equation setting  we first introduce
a matrix norm, that allows us to make a connection with the ${\cal M}$-norm of the error vector.
A corresponding derivation for $\ell=2$ can be found, for instance, in \cite[p. 2557]{Vandere_Vandew.10} and \cite[p. 149]{Benner2014}.

\begin{definition} \label{def:S}
Let 
\begin{equation}\label{defS}
\begin{array}{lrll}
  {\cal S} :& \mathbb{R}^{n\times p}&\rightarrow&\mathbb{R}^{n\times p}\\
  & X &\mapsto& \displaystyle\sum_{j=1}^\ell A_j X B_j,\\
  \end{array} 
\end{equation}
and ${\cal S}_\ell =\sum_{j=1}^\ell B_j^T \otimes A_j$.
We say that the operator ${\cal S}$ is symmetric and positive
definite if for any $0 \ne x\in \RR^{np}$, $x={\rm vec}(X)$, with $X\in\RR^{n\times p}$,
 it holds that ${\cal S}_\ell = {\cal S}_\ell^T$ and
$x^T {\cal S}_\ell x >0$, where 
$$
x^T {\cal S}_\ell x = {\rm trace}\left( \sum_{j=1}^\ell X^T A_j X B_j\right).
$$
The norm induced by this operator will be denoted 
by $\|X\|_{\cal S}$.
\end{definition} 

Note that any linear operator $\mathcal{L}:\mathbb{R}^{n\times p}\rightarrow\mathbb{R}^{n\times p}$ can be written in the form \eqref{defS} with a uniquely defined minimum number of terms $\ell$ called the \emph{Sylvester index}. See \cite{Konstantinov2000}.

Assuming $\mathcal{S}$ to be spd, in the following proposition we show that the error matrix is minimized in the ${\cal S}$-norm.

\begin{proposition}\label{th_errormin}
Let ${\cal S}(X) = F$ with ${\cal S}: X \mapsto \sum_j A_jXB_j$ spd,
and let $\text{range}(V_k)$, $\text{range}(W_k)$ be the constructed
approximation spaces, so that $X_k = V_k Y_k W_k^T$ is the Galerkin approximate
solution. 
Then
$$
\|X-X_k\|_{\cal S} = \min_{Z=V_k Y W_k^T\atop Y\in\RR^{k\times k}} \|X-Z\|_{\cal S} .
$$
\end{proposition}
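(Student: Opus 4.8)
The plan is to reduce the matrix-equation statement to the already-established vector (linear system) case by passing through the Kronecker formulation. The key observation is that the earlier subsection on the linear system setting proved exactly this optimality property: the Galerkin condition $\mathcal{V}_m^T r_m = 0$ is equivalent to minimizing the $\mathcal{M}$-norm of the error over $\mathrm{range}(\mathcal{V}_m)$, for $\mathcal{M}$ spd. So I would set $\mathcal{M} = \mathcal{S}_\ell = \sum_j B_j^T \otimes A_j$, which is spd by hypothesis, and take the subspace matrix to be $\mathcal{V}_m = W_k \otimes V_k$ (after orthonormalizing, if needed — or simply noting the argument only uses the column space, not orthonormality). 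Then $\text{vec}(X_k) = (W_k\otimes V_k)\,\text{vec}(Y_k)$ is precisely an element of $\mathrm{range}(\mathcal{V}_m)$, and the Galerkin condition $V_k^T R_k W_k = 0$ translates, as already noted in the excerpt, to $(W_k\otimes V_k)^T r_k = 0$.

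\medskip

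Concretely, the steps are: (i) vectorize everything — write $x = \text{vec}(X)$, $x_k = \text{vec}(X_k)$, $r_k = \text{vec}(R_k)$, and note $r_k = f - \mathcal{S}_\ell x_k$ with $f = \text{vec}(F)$; (ii) observe that $\|X - Z\|_{\mathcal S}^2 = (x - z)^T \mathcal{S}_\ell (x - z) = \|x - z\|_{\mathcal{S}_\ell}^2$ by the definition of the $\mathcal{S}$-norm, so the matrix minimization problem over $\{Z = V_k Y W_k^T : Y \in \mathbb{R}^{k\times k}\}$ is identical to the vector minimization of $\|x - z\|_{\mathcal{S}_\ell}$ over $z \in \mathrm{range}(W_k \otimes V_k)$, since $\text{vec}(V_k Y W_k^T) = (W_k\otimes V_k)\text{vec}(Y)$ and $Y \mapsto \text{vec}(Y)$ is a bijection onto $\mathbb{R}^{k^2}$; (iii) apply the linear-system result from Section~2.1 verbatim with $\mathcal{M} \leftarrow \mathcal{S}_\ell$ and $\mathcal{V}_m \leftarrow W_k\otimes V_k$: the minimizer $z$ is characterized by $(W_k\otimes V_k)^T(f - \mathcal{S}_\ell z) = 0$; (iv) translate that Galerkin condition back to matrix form, $(W_k\otimes V_k)^T r_k = 0 \Leftrightarrow V_k^T R_k W_k = 0$, which is exactly the condition defining $X_k$. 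Hence $X_k$ is the minimizer, which is the claim.

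\medskip

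**The main obstacle** — really the only point requiring care — is the well-definedness of the objects involved: one must check that $\mathcal{S}_\ell$ being spd guarantees that the projected problem $V_k^T R_k W_k = 0$ actually has a unique solution $Y_k$, so that $X_k$ is genuinely defined. This follows because the reduced coefficient operator in Kronecker form is $(W_k\otimes V_k)^T \mathcal{S}_\ell (W_k\otimes V_k)$, and the compression of an spd matrix onto any subspace (with full-column-rank basis) remains spd, hence invertible; so the argument is self-contained. A secondary, purely cosmetic point is that the earlier derivation assumed an orthonormal basis $\mathcal{V}_m$ whereas $W_k\otimes V_k$ need only have orthonormal columns when both $V_k$ and $W_k$ do; since the optimality characterization depends only on $\mathrm{range}(\mathcal{V}_m)$ and not on the particular basis, this is not a real restriction, but I would state it explicitly to keep the reduction clean.
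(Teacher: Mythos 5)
Your proposal is correct and follows essentially the same route as the paper: vectorize, identify $\mathcal{M}=\mathcal{S}_\ell$ and $\mathcal{V}_m=W_k\otimes V_k$, invoke the linear-system equivalence of the Galerkin condition with error minimization from Section~2.1, and translate the $\mathcal{S}_\ell$-norm of the error vector back into the matrix $\mathcal{S}$-norm via the trace identity. Your additional remarks on the unique solvability of the projected problem and on basis (non)orthonormality are sound refinements of the same argument, which the paper leaves implicit.
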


\begin{proof}
Let $e_k = {\rm vec}(X-X_k)$ be the error vector, $r_k = {\rm vec}(F-\sum_j A_j X_kB_j)$ the residual vector,
${\cal K}_m = \text{range}(W_k \otimes V_k)$  the approximation space and ${\cal S}_\ell =
\sum_j B_j^T \otimes A_j$ the coefficient matrix.  Then, since $\mathcal{S}$ is spd by assumption, also ${\cal S}_\ell$ is spd,
and the Galerkin condition ${\cal V}_m^T r_k = 0$, $\mathcal{V}_m=W_k \otimes V_k$, corresponds to the
minimization of the error. More precisely, it holds
$$
\|e_k\|_{\cal S}^2 = e_k^T {\cal S}_\ell e_k = {\rm trace}((X-X_k)^T {\cal S} (X-X_k)) = \|X-X_k\|_{\cal S}^2,
$$
and the proof is completed.
\end{proof}

Proposition~\ref{th_errormin} states that as long as the approximation
spaces are expanded, the error will decrease monotonically in the considered norm.
A Galerkin approach for a multiterm linear matrix equation was for instance employed
in \cite{Powell.Silvester.Simoncini.17}; the proposition above thus ensures that 
under the stated hypotheses on the data the method will minimize the error as the
approximation spaces grow. See also Example~\ref{Ex.3}.

A result similar to the one stated in Proposition~\ref{th_errormin} can be found in \cite{Kressner.Tobler.10} 
where the authors consider specific approximation spaces and assume $\mathcal{S}$ to be a so-called \emph{Laplace-like}
 operator. Proposition~\ref{th_errormin} shows the strength of the Galerkin method, also 
in the general matrix equation setting. Indeed, the optimality condition of the Galerkin method does neither depend on the adopted approximation spaces nor on the definition of $\mathcal{S}$, as long as this is spd.


Given a general linear matrix equation \eqref{eqn:matrixeq} written in the form $\mathcal{S}(X)=F$, 
one would like to characterize the symmetry and positive definiteness of $\mathcal{S}$ by  
looking only at the properties of the matrices $A_j$ and $B_j$ and avoid the construction of 
the large matrix $\mathcal{S}_\ell$.

 Assuming $\ell$ to be the Sylvester index of $\mathcal{S}$, it is easy to show that $\mathcal{S}$ is a symmetric operator if and only if the matrices $A_j$ and $B_j$ are 
symmetric for all $j=1,\ldots,\ell$, whereas, in general, it is not possible to identify 
the positive definiteness of $\mathcal{S}$ by examining the spectral distributions of $A_j$ and 
$B_j$, even when these are completely known. See, e.g., \cite{Lancaster1970}. Note 
that for $\mathcal{S}$ to be spd it is not necessary for all the 
$A_j$'s and $B_j$'s to be positive definite. 
Nevertheless, if $A_j$, $B_j$ are positive definite for all 
$j=1,\ldots,\ell$, then $\mathcal{S}$ is positive definite; see, e.g., \cite[Proposition 3.1]{Vandere_Vandew.10}
for $\ell =2$.
Therefore, in the case of the Lyapunov equation with $A$ spd, also the operator $\mathcal{S}$ is spd
and it holds that
$$
\|X\|_{\cal S}^2 = 2\, {\rm trace}( X^T A X).
$$
Another case where the properties of $\mathcal{S}$ can be determined in terms of the (symmetric)
coefficient matrices $A_j$ and $B_j$
is the Sylvester operator ${\cal S} : X \mapsto  A X + X B$. 
By exploiting 
the property of the Kronecker product, it holds that $\mathcal{S}$ is positive definite if
and only if $\lambda_i(A)+\lambda_j(B) >0$ for all $i$s and $j$s.
Moreover, the norm $\|\cdot\|_{\cal S}$ can be
written as $\|X\|_{\cal S}^2 = {\rm trace}( X^T A X) + {\rm trace}( X B X^T)$.

\begin{remark}
 Consider the Lyapunov equation 
\eqref{eqn:Lyap} with the spd coefficient matrix $A$,  and let
$E_k := X-X_k$ be the corresponding error matrix. Then, the previous discussion shows that
$$
\|E_k\|_{\cal S}^2 = \min_{Z=V_k Y W_k^T\atop Y\in\RR^{k\times k}} \|X-Z\|_{\cal S}^2
= 2\, {\rm trace}( E_k^T A E_k).
$$
\end{remark}

In the remark above we have not specified whether the known term $F$ in (\ref{eqn:Lyap}) 
needs to be
symmetric. If $F$ is symmetric, then indeed the two spaces can coincide, and $E_k$ is also symmetric. On the other hand,
if  $F$ has the form $F=F_1 F_2^T$, possibly low rank,  natural choices as
approximation spaces are such that ${\rm range}(F_1)\subseteq {\rm range}(V_k)$ and
${\rm range}(F_2)\subseteq {\rm range}(W_k)$, so that the (vector) residual is orthogonal
to $F_2\otimes F_1$. A possible alternative could use $V_k=W_k$ such that 
${\rm range}(F_1), {\rm range}(F_2) \subseteq {\rm range}(V_k)$, where however in
general we expect ${\rm range}(V_k)$ to have larger dimension than in the previous case.

\begin{num_example}\label{Ex.3}
{\rm
By applying the stochastic Galerkin methodology for the discretization of elliptic stochastic PDEs 
\cite{Babuska2004}, the resulting algebraic formulation 
can be written as the linear matrix equation~\eqref{eqn:matrixeq} with typically $\ell >2$.
When dealing with the stochastic steady-state diffusion problem with homogeneous Dirichlet
boundary conditions, 
%
the symmetric matrices $A_j$ and $B_j$ 
may  not all be positive definite; nonetheless, 
 the associated operator $\mathcal{S}$ is symmetric and indeed {\it positive definite} 
(see, e.g., \cite{Powell2009}), so that the previous theory applies. 
In the following we consider the 
Galerkin approach developed in \cite{Powell.Silvester.Simoncini.17} -- based on the rational
Krylov subspace -- to
illustrate the monotonic decrease of the error $\mathcal{S}$-norm as
the approximation space increases\footnote{The Matlab code is available at {\tt http://www.dm.unibo.it/\textasciitilde simoncin/software.html.}}.
%
%
 We generate $A_j$ and $B_j$ as the second test case  
in the S-IFISS package~\cite{Silvester2015}
with the default setting for all the requested parameters.
This yields} a linear matrix equation of the form~\eqref{eqn:matrixeq} with 
$\ell=6$, $A_j\in\mathbb{R}^{n\times n}$, $n=225$, and $B_j\in\mathbb{R}^{p\times p}$, $p=56$. 
The right-hand side $F$ has rank 1.
Thanks to the small problem dimension, we could compute the vectorized solution $x\in\mathbb{R}^{np}$
as $x = {\rm vec}(X)= {\cal S}_\ell^{-1}f$ (Matlab function ``$\setminus$''), to be used as a reference
``exact'' solution.
In particular, if $X_k$ denotes the approximate solution obtained after $k$ iterations of the Galerkin method, we compute
$\|X-X_k\|_{\mathcal{S}}/\|X\|_{\mathcal{S}}$ until this falls below $10^{-6}$.
Figure~\ref{Fig.Ex.3} displays the history of this relative error $\mathcal{S}$-norm, illustrating
the expected monotonically non-increasing curve.

\begin{figure}
  \centering    
  \caption{Example~\ref{Ex.3}. Relative error energy norm. }\label{Fig.Ex.3}
	\begin{tikzpicture}
  \begin{semilogyaxis}[width=0.8\linewidth, height=.27\textheight,
    legend pos = north east,
    xlabel = $k$, ylabel = Relative Error ($\mathcal{S}$-norm)]
    \addplot+[thick] table[x index=0, y index=1]  {errenergynorm_newexample.dat};
  \end{semilogyaxis}          
\end{tikzpicture}
\end{figure}

\end{num_example}

\section{Convergence properties}\label{Convergence properties}

In the previous section we have shown that the Galerkin condition leads to 
a minimization of the error $\mathcal{S}$-norm and this property does 
not depend on the selected space $\mathcal{K}_k=\text{range}(V_k)$. In actual computations, 
a measurable estimate of the error is needed and in \cite{Simoncini.Druskin.09} an upper 
bound on the Euclidean norm of the error is 
provided in the case of the Lyapunov equation \eqref{eqn:Lyap} with rank-one right-hand side 
$F=b b^T$ with $\|b\|=1$, and a positive definite but not necessarily symmetric $A$.
By exploiting the closed-form of the solution $X$,
the authors showed
that
$$
\|X - X_k\|_2 \leq 2 \int_0^\infty e^{-t\alpha_{\min}(A)} \|x-x_m\|_2 dt, \quad \alpha_{\min}(A) = \lambda_{\min}((A+A^T)/2),
$$
where $x =  e^{-t A} b$, $x_k = V_k e^{-t A_k} e_1$, $A_k:=V_k^TAV_k$, and $\|\cdot\|_2$ denotes the Euclidean norm. 

 This led to the following proposition when the selected approximation space is the Krylov subspace $\text{range}(V_k)=K_k(A,b) = {\rm span}\{b, Ab, \ldots, A^{k-1}b\}$ and $A$ is symmetric.

\begin{proposition}[\cite{Simoncini.Druskin.09}]\label{prop:sym} Let $A$ be spd, and
let $\lambda_{\max}$ and $\lambda_{\min}$ be the largest and the smallest eigenvalue of $A$, respectively. Denoting by $\hat\kappa=(\lambda_{\max}+\lambda_{\min})/2\lambda_{\min}$ the condition number of the spd matrix $A+\lambda_{\min} I$,
then the Galerkin approximate solution $X_k=V_k Y_k V_k^T$ satisfies
\begin{eqnarray}
\|X- X_k\|_2 & \leq &
 2\frac {  \sqrt{\hat\kappa}+1 } 
 { \lambda_{\min}\sqrt{\hat\kappa}} 
\left ( \frac{\sqrt{\hat \kappa} -1}{\sqrt{\hat\kappa}+1}\right )^k .
\label{eqn:bound1} 
\end{eqnarray}

\end{proposition}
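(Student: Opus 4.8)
The plan is to combine the integral inequality just recalled with classical estimates for the polynomial Krylov approximation of the matrix exponential. Since $A$ is symmetric, $\alpha_{\min}(A)=\lambda_{\min}$, so the quoted bound reads
$$
\|X-X_k\|_2 \le 2\int_0^\infty e^{-t\lambda_{\min}}\,\delta_k(t)\,dt, \qquad \delta_k(t):=\|e^{-tA}b-V_ke^{-tA_k}e_1\|_2,
$$
with $A_k=V_k^TAV_k$, and $\delta_k(t)$ is precisely the error of the Lanczos approximation of $e^{-tA}b$ from $K_k(A,b)=\text{range}(V_k)$. First I would bound $\delta_k(t)$ by a polynomial best-approximation quantity: writing, for any polynomial $p$ of degree at most $k-1$, $e^{-tA}b-V_ke^{-tA_k}e_1=(e^{-tA}-p(A))b-V_k(e^{-tA_k}-p(A_k))e_1$ (using $b=V_ke_1$, as $\|b\|=1$, and $p(A)b=V_kp(A_k)e_1$), and bounding each term in the Euclidean norm while noting that $A$ and $A_k$ both have spectrum in $[\lambda_{\min},\lambda_{\max}]$, one gets $\delta_k(t)\le 2\min_{\deg p\le k-1}\max_{\lambda\in[\lambda_{\min},\lambda_{\max}]}|e^{-t\lambda}-p(\lambda)|$.

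The key point — which is what turns the condition number $\kappa=\lambda_{\max}/\lambda_{\min}$ of $A$ into $\hat\kappa$ — is that $\int_0^\infty e^{-t\lambda_{\min}}e^{-t\lambda}\,dt=(\lambda+\lambda_{\min})^{-1}$, and as $\lambda$ runs over $[\lambda_{\min},\lambda_{\max}]$ the value $\lambda+\lambda_{\min}$ runs over $[2\lambda_{\min},\lambda_{\max}+\lambda_{\min}]$, i.e.\ over the spectrum of $A+\lambda_{\min}I$, whose condition number is $\hat\kappa$. Accordingly, for each fixed $t$ I would choose $p=p_t$ to be a scaled and shifted Chebyshev polynomial of degree $k-1$ for the interval $[\lambda_{\min},\lambda_{\max}]$, for which the classical estimate bounds $\max_{[\lambda_{\min},\lambda_{\max}]}|e^{-t\lambda}-p_t(\lambda)|$ by a geometric-in-$k$ quantity with ratio $R^{-1}$, $R$ the radius of a Bernstein ellipse $\mathcal{E}_R$ of $[\lambda_{\min},\lambda_{\max}]$, and prefactor proportional to $\max_{z\in\mathcal{E}_R}|e^{-tz}|$. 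The growth in $t$ of this prefactor is absorbed by the weight $e^{-t\lambda_{\min}}$ upon integrating: the integral $\int_0^\infty e^{-t\lambda_{\min}}\max_{z\in\mathcal{E}_R}|e^{-tz}|\,dt$ is finite iff $\lambda_{\min}+\min_{z\in\mathcal{E}_R}\mathrm{Re}\,z>0$, and optimizing $R$ under this constraint leads to the critical value $R=(\sqrt{\hat\kappa}+1)/(\sqrt{\hat\kappa}-1)=1/\rho$ with $\rho=(\sqrt{\hat\kappa}-1)/(\sqrt{\hat\kappa}+1)$. Collecting the elementary constant $\int_0^\infty e^{-2t\lambda_{\min}}\,dt=(2\lambda_{\min})^{-1}$ and the two factors of $2$ then gives $\|X-X_k\|_2\le 2\,\frac{\sqrt{\hat\kappa}+1}{\lambda_{\min}\sqrt{\hat\kappa}}\,\rho^{\,k}$.

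The main obstacle is exactly this last optimization. Since $\delta_k(t)$ does not decay to $0$ as $t\to\infty$ — it tends to $|b^T\phi|$, with $\phi$ a unit $\lambda_{\min}$-eigenvector of $A$ — there is no uniform-in-$t$ geometric bound, so one must split the $t$-integral, using the Chebyshev/Bernstein estimate where it is effective and the trivial bound $\delta_k(t)\le 2$ together with the decay of $e^{-t\lambda_{\min}}$ on the tail, and tune $R$ so that both pieces come out $\le c\,\rho^k$ with $c=(\sqrt{\hat\kappa}+1)/\sqrt{\hat\kappa}$; recovering exactly this closed-form constant, rather than a $\rho^{1-\varepsilon}$-type rate or a $k$-dependent prefactor, is the only delicate step. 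Everything else — the quoted integral inequality, shift-invariance of the Krylov subspace, and polynomial exactness of the Galerkin (Lanczos) projection — is routine. An alternative route, making the role of $A+\lambda_{\min}I$ even more transparent, writes $e^{-tA}$ as a Laplace contour integral of the resolvents $(zI+A)^{-1}$ and applies the conjugate gradient error bound to the shifted systems $(zI+A)y=b$ along a contour $\mathrm{Re}\,z=-\lambda_{\min}+\varepsilon$.
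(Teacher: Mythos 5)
Your starting point is the same as the one the paper (and the cited reference) uses: the integral inequality $\|X-X_k\|_2\le 2\int_0^\infty e^{-t\lambda_{\min}}\delta_k(t)\,dt$ together with the reduction of $\delta_k(t)$ to a best uniform polynomial approximation error of $e^{-t\lambda}$ on $[\lambda_{\min},\lambda_{\max}]$ via polynomial exactness of the Lanczos projection. But the decisive quantitative step is missing: you never actually produce the bound with the rate $\rho^k$, $\rho=(\sqrt{\hat\kappa}-1)/(\sqrt{\hat\kappa}+1)$, and the exact prefactor $2(\sqrt{\hat\kappa}+1)/(\lambda_{\min}\sqrt{\hat\kappa})$ — you explicitly defer it as the ``main obstacle,'' and the route you sketch (a fixed Bernstein ellipse, splitting the $t$-integral, then optimizing $R$) generically yields either a degraded rate or a $k$-dependent/looser constant, so as written the proposition is not established. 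There is also a factual slip in the motivation for the splitting: $\delta_k(t)$ does tend to $0$ as $t\to\infty$; since the Ritz values lie in $[\lambda_{\min},\lambda_{\max}]$ one has $\|V_ke^{-tA_k}e_1\|_2\le e^{-t\lambda_{\min}}$ and hence $\delta_k(t)\le 2e^{-t\lambda_{\min}}$ (it does not converge to $|b^T\phi|$); the genuine difficulty is only that the ellipse-based bound on the best approximation error grows in $t$ when the ellipse crosses the imaginary axis.

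For comparison, the proof in \cite{Simoncini.Druskin.09} (the paper itself only cites it) avoids any $t$-splitting or ellipse optimization and obtains the exact constant in closed form: expand $e^{-tA}b$ in the Chebyshev series of the interval $[\lambda_{\min},\lambda_{\max}]$, whose coefficients are $2e^{-td}I_j(tc)$ with $d=(\lambda_{\max}+\lambda_{\min})/2$, $c=(\lambda_{\max}-\lambda_{\min})/2$ and $I_j$ the modified Bessel functions; by exactness on polynomials of degree $\le k-1$, $\delta_k(t)$ is bounded by twice the tail $\sum_{j\ge k}$ of this series; then integrate term by term against the weight $e^{-t\lambda_{\min}}$ using the Laplace-transform identity $\int_0^\infty e^{-\alpha t}I_j(\beta t)\,dt=\bigl(\beta/(\alpha+\sqrt{\alpha^2-\beta^2})\bigr)^j/\sqrt{\alpha^2-\beta^2}$ with $\alpha=\lambda_{\min}+d$, $\beta=c$. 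A short computation gives $\beta/(\alpha+\sqrt{\alpha^2-\beta^2})=\rho$ and $\sqrt{\alpha^2-\beta^2}=2\lambda_{\min}\sqrt{\hat\kappa}$, and summing the geometric series (using $1/(1-\rho)=(\sqrt{\hat\kappa}+1)/2$) yields exactly \eqref{eqn:bound1}. If you want to complete your argument along your own lines, you would have to replace the ellipse heuristic by this (or an equivalent) explicit integration; otherwise the claimed constant and clean geometric rate remain unproved.
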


This bound, in terms of slope as $k$ increases, was shown to be sharp in \cite{Simoncini.Druskin.09}.
Notice that the bound \eqref{eqn:bound1} holds also for the Frobenius norm of the error, namely $\|X-X_k\|_F$. Indeed, we can still write $
\|X - X_k\|_F \leq 2 \int_0^\infty e^{-t\alpha_{\min}(A)} \|x-x_m\|_F dt$ and the rest of the proof of Proposition~\ref{prop:sym} makes use of bounds for norms of vectors only, for which the Euclidean and the 
Frobenius norms coincide. See \cite[Proposition 3.1]{Simoncini.Druskin.09} for more details.
The bound can be generalized to the use of other spaces, such as rational Krylov subspaces,
see, e.g., \cite{Beckermann.11,Beckermann.Kressner.Tobler.13,%
Knizhnerman.Simoncini.11,Druskin.Knizhnerman.Simoncini.11}.

We generalize the bound presented in Proposition~\ref{prop:sym} to the case of the Sylvester equation,
\begin{equation}\label{eq.Sylv}
 AX + X B = b_1 b_2^T,
\end{equation}
with $A$ and $B$ symmetric and positive definite;
without loss of generality, we can assume that $\|b_1\|_\star=\|b_2\|_\star=1$
where $\|\cdot\|_\star$ 
denotes either the Euclidean or the Frobenius norms.

We first recall the Cauchy representation
of the solution matrix $X$ to \eqref{eq.Sylv}. Let us for now only assume that
$A$ and $B$ are positive definite, and not necessarily symmetric. We can write
(see, e.g., \cite{Lancaster1970})
$$
X=\int_0^\infty e^{-t A} b_1 b_2^T e^{-tB} dt.
$$

Consider the approximation 
$X_k=V_kY_kW_k^T$ where $V_k$ and $W_k$ span suitable subspaces and both have orthonormal columns. The 
matrix  $Y_k$ is obtained by imposing the Galerkin condition on $R_k = A X_k + X_k B - b_1b_2^T$, that is
$$
V_k^T R_k W_k = 0 \quad \Leftrightarrow \quad
(V_k^T A V_k) Y_k + Y_k (W_k^T B W_k) - (V_k^Tb_1)(b_2^T W_k) = 0.
$$
Let $A_k := V_k^T A V_k,\;B_k: = W_k^T B W_k$. Thus $Y_k$ is obtained by solving a reduced
Sylvester equation, whose size depends on the approximation space dimensions.
Since the spectrum of $A_k$ ($B_k$) is contained in the spectral region of $A$ ($B$) , 
we have that $\Lambda(A_k)+\Lambda(B_k)\subset\mathbb{C}_+$ and the matrix $Y_k$ can be 
written in integral form as $Y_k=\int_0^\infty e^{-t A_k} (V_k^Tb_1)(b_2^T W_k) e^{-tB_k} dt$ so that
$$
X_k = V_k \int_0^\infty e^{-t A_k} (V_k^Tb_1)(b_2^T W_k) e^{-tB_k} dt\, W_k^T.
$$

Let $x:=e^{-t A} b_1, x_k := V_k e^{-t A_k} (V_k^Tb_1)$,
$y:=e^{-t B} b_2, y_k := W_k e^{-t B_k} (W_k^Tb_2)$. Then, using $\|x\|_\star\leq e^{-t\alpha_{\min}(A)}$ (see, e.g., \cite[Lemma 3.2.1]{Corless2003}), and since $\alpha_{\min}(A_k)\geq\alpha_{\min}(A)$, it holds 
 that $\|x_k\|_\star\leq e^{-t\alpha_{\min}(A)}$. 
Similarly, $\|y\|_\star,\|y_k\|_\star\leq e^{-t\alpha_{\min}(B)}$. Therefore, 
(see also \cite[Lemma 4.7]{Kressner.Tobler.10})
\begin{eqnarray}\label{eq.integral.Sylv}
\|X - X_k\|_\star &= & \left\|\int_0^\infty (xy^T - x_k y_k^T) dt\right\|_\star \notag \\
&= & \frac{1}{2}\left\|\int_0^\infty (x+x_k)(y-y_k)^T + (x-x_k)(y+y_k)^T) dt\right\|_\star \notag\\
 & \leq & \frac{1}{2}\int_0^\infty \Big((\|x\|_\star+\|x_k\|_\star) \|y-y_k\|_\star + 
         \|x-x_k\|_\star (\|y\|_\star+\|y_k\|_\star)\Big) dt\notag \\
 &\leq&  \int_0^\infty \Big(e^{-t\alpha_{\min}(A)}\|y-y_k\|_\star + e^{-t\alpha_{\min}(B)} \|x-x_k\|_\star\Big) dt \notag \\
 & = & \int_0^\infty (\|\hat y-\hat y_k\|_\star +  \|\hat x-\hat x_k\|_\star ) dt ,
\end{eqnarray}
where $\hat y = e^{-t (B+\lambda_{\min}(A) I)} b_2$, 
$\hat x = e^{-t (A+\lambda_{\min}(B) I)} b_1$, and analogously for $\hat y_k, \hat x_k$.
The inequality in~\eqref{eq.integral.Sylv} states that the $\star$-norm of the error 
associated with the Galerkin solution can be bounded by integrating over 
$[0,\infty)$ the errors obtained in the approximation of the exponential of 
the shifted matrices $B+\lambda_{\min}(A) I$ and $A+\lambda_{\min}(B) I$.

In the next proposition we specialize the bound above when the Krylov 
subspaces $\text{range}(V_k)=K_k(A,b_1)$ and $\text{range}(W_k)=K_k(B,b_2)$ are adopted as approximation spaces
and $A$, $B$ are both symmetric and positive definite. To this end, let us define
$\lambda_{\min}(A)$,
 $\lambda_{\max}(A)$, $\lambda_{\min}(B)$, and $\lambda_{\max}(B)$ to be the
extreme eigenvalues of $A$ and $B$, respectively, and
$$
\hat \kappa_A=\frac{\lambda_{\max}(A)+\lambda_{\min}(B)}{\lambda_{\min}(A)+\lambda_{\min}(B)}, \qquad
\hat \kappa_B=\frac{\lambda_{\max}(B)+\lambda_{\min}(A)}{\lambda_{\min}(B)+\lambda_{\min}(A)},
$$
the condition numbers of $A+\lambda_{\min}(B)I$ and $B+\lambda_{\min}(A)I$, respectively.

\begin{proposition}
 Let $A$ and $B$ be spd and 
 $\text{range}(V_k)=K_k(A,b_1)$, $\text{range}(W_k)=K_k(B,b_2)$.
Then the Galerkin approximate solution $X_k=V_kY_kW_k^T$ to \eqref{eq.Sylv} is such that 
 {\small
$$\|X-X_k\|_\star \leq\frac{2}{\lambda_{\min}(A)+\lambda_{\min}(B)}\left(\frac{\sqrt{\hat \kappa_A}+1}{\sqrt{\hat \kappa_A}}\left(\frac{\sqrt{\hat \kappa_A}-1}{\sqrt{\hat \kappa_A}+1}\right)^k+
\frac{\sqrt{\hat \kappa_B}+1}{\sqrt{\hat \kappa_B}}\left(\frac{\sqrt{\hat \kappa_B}-1}{\sqrt{\hat \kappa_B}+1}\right)^k\right),
$$}
where $\|\cdot\|_\star$ denotes either the Euclidean or the Frobenius norm.
\end{proposition}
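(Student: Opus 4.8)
The plan is to take the inequality \eqref{eq.integral.Sylv} as the starting point: it already reduces the claim to bounding the two scalar integrals $\int_0^\infty\|\hat x-\hat x_k\|_\star\,dt$ and $\int_0^\infty\|\hat y-\hat y_k\|_\star\,dt$ and then adding the two estimates. Since $A$ and $B$ enter \eqref{eq.Sylv} symmetrically, with $b_1,V_k$ playing the role of $b_2,W_k$, I would estimate only the first integral in detail and obtain the second one by exchanging $A\leftrightarrow B$, $b_1\leftrightarrow b_2$ and $\hat\kappa_A\leftrightarrow\hat\kappa_B$.

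The observation that drives the bound on $\int_0^\infty\|\hat x-\hat x_k\|_\star\,dt$ is that $\hat x=e^{-tM_A}b_1$ and $\hat x_k=V_k e^{-t\widetilde M}V_k^Tb_1$, where $M_A:=A+\lambda_{\min}(B)I$ is spd and $\widetilde M:=A_k+\lambda_{\min}(B)I=V_k^TM_AV_k$, and that a polynomial Krylov subspace is shift invariant, so $\text{range}(V_k)=K_k(A,b_1)=K_k(M_A,b_1)$. Thus $\hat x_k$ is precisely the Galerkin (Lanczos) approximation of $t\mapsto e^{-tM_A}b_1$ extracted from $K_k(M_A,b_1)$, and the core estimate behind Proposition~\ref{prop:sym} can be invoked: for an spd matrix $N$ with smallest eigenvalue $\mu$ and condition number $\kappa$, a unit vector $c$, and $V_k$ orthonormal with $\text{range}(V_k)=K_k(N,c)$, one has
$$
\int_0^\infty\|e^{-tN}c-V_k e^{-t(V_k^TNV_k)}V_k^Tc\|\,dt\ \leq\ \frac{2(\sqrt{\kappa}+1)}{\mu\sqrt{\kappa}}\left(\frac{\sqrt{\kappa}-1}{\sqrt{\kappa}+1}\right)^k ,
$$
which is exactly the inequality proved — for a suitably shifted matrix — inside the proof of Proposition~\ref{prop:sym} (see \cite{Simoncini.Druskin.09}). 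Applying it with $N=M_A$, $c=b_1$ and recalling that $\lambda_{\min}(M_A)=\lambda_{\min}(A)+\lambda_{\min}(B)$, that the condition number of $M_A$ is $\hat\kappa_A$, and that $\widetilde M=V_k^TM_AV_k$ has its spectrum in $[\lambda_{\min}(M_A),\lambda_{\max}(M_A)]$ (so $\lambda_{\min}(\widetilde M)\geq\lambda_{\min}(M_A)$), I obtain
$$
\int_0^\infty\|\hat x-\hat x_k\|_\star\,dt\ \leq\ \frac{2}{\lambda_{\min}(A)+\lambda_{\min}(B)}\,\frac{\sqrt{\hat\kappa_A}+1}{\sqrt{\hat\kappa_A}}\left(\frac{\sqrt{\hat\kappa_A}-1}{\sqrt{\hat\kappa_A}+1}\right)^k .
$$
As noted right after Proposition~\ref{prop:sym}, the whole argument only uses norms of \emph{vectors}, for which the Euclidean and Frobenius norms coincide, so the estimate holds with $\|\cdot\|_\star$ being either one. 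The analogous bound for $\int_0^\infty\|\hat y-\hat y_k\|_\star\,dt$ follows with $M_B:=B+\lambda_{\min}(A)I$, whose smallest eigenvalue is again $\lambda_{\min}(A)+\lambda_{\min}(B)$ and whose condition number is $\hat\kappa_B$.

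Finally I would insert the two estimates into \eqref{eq.integral.Sylv} and collect the common factor $1/(\lambda_{\min}(A)+\lambda_{\min}(B))$, which gives precisely the stated inequality. I expect the only delicate point to be the transfer carried out in the second step, namely checking that every ingredient behind Proposition~\ref{prop:sym} — the use of a polynomial Krylov subspace as approximation space, the extremal polynomial on the relevant spectral interval, and the shifted condition number it produces — carries over unchanged when $A$ is replaced by $M_A$ (respectively $B$ by $M_B$). This rests on the two facts $K_k(A,b_1)=K_k(M_A,b_1)$ and $\sigma(V_k^TM_AV_k)\subseteq[\lambda_{\min}(M_A),\lambda_{\max}(M_A)]$; once these are in place, what is left is just the bookkeeping outlined above.
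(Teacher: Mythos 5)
Your proposal is correct and follows essentially the same route as the paper, whose proof simply applies the argument of \cite[Proposition 3.1]{Simoncini.Druskin.09} to the two integrals in \eqref{eq.integral.Sylv}; your extra care with the shift invariance $K_k(A,b_1)=K_k(A+\lambda_{\min}(B)I,b_1)$ and with $\lambda_{\min}(A+\lambda_{\min}(B)I)=\lambda_{\min}(A)+\lambda_{\min}(B)$ is exactly the bookkeeping the paper leaves implicit, and your constants reproduce the stated bound.
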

\begin{proof}
 The proof can be obtained by applying the same arguments of the proof of \cite[Proposition 3.1]{Simoncini.Druskin.09} to the single integrals
  $\int_0^\infty \|\hat y-\hat y_k\|_\star dt$,  $\int_0^\infty \|\hat x-\hat x_k\|_\star dt$ in \eqref{eq.integral.Sylv}.
\end{proof}

Convergence results for generic matrix equations of the form \eqref{eqn:matrixeq} are difficult to derive as no easy-to-handle closed-form solution is known in general. The main difficulty is given by the fact that the exponential of a Kronecker sum $\sum_{j=1}^\ell B_j^T\otimes A_j$ cannot be separated in the product of the exponentials of the single terms
if no further assumptions on $A_j$ and $B_j$ are considered.

By adapting the reasonings proposed in this section, one may be able to deduct error estimates for some special equations of the form
$$\sum_{j,k=0}^\ell \alpha_{j,k}A^jXB^k=F,$$
where the coefficient matrices are given as powers of two \emph{seed} matrices $A$ and $B$, and $\alpha_{j,k}\in\mathbb{R}$ for all $j,k$. Indeed, in this case, the exact solution $X$ can be written in integral form as illustrated in \cite[Theorem 4]{Lancaster1970}. 
However, such derivations deserve a separate analysis.

\section{Comparison with the Kronecker formulation}\label{Comparisons with the Kronecker formulation}
Given a linear matrix equation of the form \eqref{eqn:matrixeq}, the simplest-minded numerical procedure for 
its solution 
consists in applying well-established iterative schemes to the {\em vector} linear system obtained 
from \eqref{eqn:matrixeq} by Kronecker transformations, namely 
\begin{equation}\label{eq.Kron}
\left(\sum_{j=1}^\ell B_j^T\otimes A_j\right)\text{vec}(X)=\text{vec}(F). 
\end{equation}
Sometimes this is the only option as effective algorithms to 
solve \eqref{eqn:matrixeq} in its \emph{natural} matrix equation form are still lacking in the literature in
the most general case.
The methods developed so far require some additional assumptions on the coefficient matrices $A_j$, $B_j$;
see, e.g., \cite{Benner2013a,Shank2016,Jarlebring2018,kressner2015truncated,Powell.Silvester.Simoncini.17}.

In this section we show that exploiting the matrix structure of 
equation~\eqref{eqn:matrixeq} not only leads to numerical algorithms with lower computational costs per iteration and 
modest storage demands, but they also avoid some spectral redundancy encoded in the
problem formulation \eqref{eq.Kron}. Such a redundancy
often leads to a delay in the convergence of the adopted solution scheme when iterative procedures are applied to \eqref{eq.Kron}. 
A similar discussion can be found in \cite[Remark 4.5]{Kressner.Tobler.10} for more general tensor structured problems.

To illustrate this phenomenon we consider a Lyapunov equation of the form \eqref{eqn:Lyap}
with $A\in\mathbb{R}^{n\times n}$ spd and $F=bb^T$, $b\in\mathbb{R}^n$, $\|b\|=1$. We compare the Galerkin method applied to the matrix equation~\eqref{eqn:Lyap} with the CG method  applied to the linear system
\begin{equation}\label{eq.Kron_lyap}
 \mathcal{A}\text{vec}(X)=\text{vec}(bb^T),\quad \mathcal{A}=A\otimes I+I\otimes A\in\mathbb{R}^{n^2\times n^2}.
\end{equation}
Notice that since $A$ is spd, $\mathcal{A}$ is also spd. Let $x = {\rm vec}(X)$ 
be the exact solution to (\ref{eq.Kron_lyap}).
Let the CG initial guess be equal to the zero vector, and let $x_k^{cg}$ be the approximate solution to $x$
obtained after $k$ CG iterations. Then the following classical bound 
for the energy-norm of the error $x - x_k^{cg}$ holds
\begin{equation}\label{eq.CGbound1}
 \frac{\|x-x_k^{cg}\|_\mathcal{A}}{\|x\|_\mathcal{A}}\leq 
2\left(\frac{\sqrt{\kappa}-1}{\sqrt{\kappa}+1}\right)^k, 
\end{equation}
where $\kappa=\lambda_{\max}(\mathcal{A})/\lambda_{\min}(\mathcal{A})=\lambda_{\max}(A)/\lambda_{\min}(A)$. See, e.g., \cite[Theorem 10.2.6]{Golub2013}. This bound may be rather pessimistic since it takes into account neither the role of the right-hand side nor the actual spectral distribution of 
$\mathcal{A}$. See, e.g., \cite{Sluis1986,Beckermann2002,Beckermann2001,Liesen.Strakos.book.12}.

We want to compare the bound in \eqref{eq.CGbound1} with the estimate proposed in Proposition~\ref{prop:sym}, using
the same norms and relative quantities. To this end, we recall that 
for any vector $v$ it holds that
$$
\sqrt{2\lambda_{\min}(A)}\|v\|_2 \leq \|v\|_\mathcal{A} \leq \sqrt{2\lambda_{\max}(A)}\|v\|_2.
$$
In particular, letting $X_k^{cg}\in\mathbb{R}^{n\times n}$ be such that $\text{vec}(X_k^{cg})=x_k^{cg}$, we have
\begin{equation}\label{eq.CGbound2}
 \frac{\|X-X_k^{cg}\|_F}{\|X\|_F} =
 \frac{\|x-x_k^{cg}\|_2}{\|x\|_2}\leq \sqrt{\kappa} \frac{\|x-x_k^{cg}\|_{\cal A}}{\|x\|_{\cal A}}
\leq
  2\sqrt{\kappa}\left(\frac{\sqrt{\kappa}-1}{\sqrt{\kappa}+1}\right)^k. 
\end{equation}
Therefore, to obtain a relative error (in Frobenius norm) of less than $\varepsilon$,
a sufficient number $k_*^{(cg)}$ of CG iterations is given by
$$
k_*^{(cg)} := \frac{ \log\left( \varepsilon/(2\sqrt{\kappa})\right)}%
{ \log\left( (\sqrt{\kappa}-1)/(\sqrt{\kappa}+1)\right)}.
$$

If $X_k$ denotes the approximate solution computed after $k$ iterations of the Galerkin-based
method with $K_k(A,b)$ as approximation space, 
the error norm  bound in \eqref{eqn:bound1} can be written in relative terms as
\begin{equation}\label{eq.Galerkin_bound2}
 \frac{\|X-X_k\|_F}{\|X\|_F}\leq 4(  \sqrt{\hat\kappa}+1 ) \sqrt{\hat\kappa}
\left ( \frac{\sqrt{\hat \kappa} -1}{\sqrt{\hat\kappa}+1}\right )^k ,
\end{equation}
where we used
$\|x\|_2\geq \lambda_{\min}(\mathcal{A}^{-1})\,\|\text{vec}(bb^T)\|_F= 1/(\lambda_{\max}+\lambda_{\min})$. 
Once again, to obtain a relative error (in Frobenius norm) of less than $\varepsilon$,
a sufficient number $k_*^{(G)}$ of iterations is
given by
$$
k_*^{(G)} := \frac{ \log\left( \varepsilon/(4\sqrt{\hat\kappa} (\sqrt{\hat\kappa}+1))\right)}%
{ \log\left( (\sqrt{\hat\kappa}-1)/(\sqrt{\hat\kappa}+1)\right)}.
$$

The bounds \eqref{eq.CGbound2}--\eqref{eq.Galerkin_bound2} show that
 the asymptotic behavior of the relative error norms of CG and the Galerkin method are guided by
$\kappa$ and $ \hat \kappa$, respectively, where
%
$\hat \kappa$ is always smaller than $\kappa$, for $\kappa>1$. Indeed,
$$
\hat \kappa=
\frac{\lambda_{\max}(A)+\lambda_{\min}(A)}{2 \lambda_{\min}(A)}=\frac{1}{2}\kappa+\frac{1}{2} .
$$

The worse conditioning of the linear system formulation \eqref{eq.Kron_lyap} 
may lead to a delay in the convergence of CG so that, for a fixed threshold, CG may require more iterations to converge than 
 the Galerkin method applied to the matrix equation \eqref{eqn:Lyap}. 
This is numerically illustrated in the examples below.

We once again stress that the similarities of the two formulations (matrix equation and Kronecker form) highlight the
fact that what makes the matrix equation context more efficient than CG on ${\cal A}x =b$
is the special choice of the approximation space, that is ${\cal K}_m = \text{range}(V_k \otimes V_k)$, which 
heavily takes into account the Kronecker sum structure of ${\cal A}$. On the other hand, CG applied blindly on ${\cal A}$
generates a redundant approximation space.

\begin{num_example}\label{Ex.1}
{\rm
We consider the spd matrix $A=QD Q^T\in\mathbb{R}^{n\times n}$, where $D$ is a diagonal matrix whose diagonal entries are
uniformly distributed (in logarithmic scale) values between 1 and 100,
and $Q$ is orthogonal. This means that $\kappa=100$ and $\hat \kappa=50.5$ for any $n$. 
The vector $b\in\mathbb{R}^n$ is a random vector with unit norm.
%

For $\varepsilon=10^{-6}$, a direct computation shows 
that $k_*^{(G)}=68$ iterations of the Galerkin method are sufficient to get 
$\|X-X_{k_*^{(G)}}\|_F/\|X\|_F\leq \varepsilon$, whereas according to the bound~\eqref{eq.CGbound2}, $k_*^{(cg)}=84$ iterations
 are required for CG to reach the same accuracy when solving ${\cal A} x = b$. In practice, the number of
actual iterations can be lower, since this estimate is obtained from a bound.


Figure~\ref{Ex.1_Fig.1} reports the error convergence history
of the two iterations, using logarithmic scale for $n=1000$. 
 The two methods are stopped as soon as the relative error norm becomes smaller than $\varepsilon$.
The ``exact'' solution $X$ was computed
with the Bartels-Stewart method \cite{Bartels1972}, which was feasible due to the small problem size.

\begin{figure}
  \centering    
  \caption{Example~\ref{Ex.1}. Relative error norms produced by the Galerkin and CG methods.  \label{Ex.1_Fig.1}}
	\begin{tikzpicture}
  \begin{semilogyaxis}[width=0.8\linewidth, height=.27\textheight,
    legend pos = north east, 
    xlabel = $k$, ylabel = Relative Error ($F$-norm)]
    \addplot+[ mark = o, blue] table[x index=0, y index=1]  {err_cg2.dat};
     \addplot+ [ mark = square, red]table[x index=0, y index=1]  {err_galerkin2.dat};
      \legend{CG (lin. system),Galerkin (matrix eq.)};
  \end{semilogyaxis}          
\end{tikzpicture}
\end{figure}

Both methods require slightly fewer iterations than predicted by the bounds. Nonetheless,
we can still appreciate that CG applied to the linear system~\eqref{eq.Kron_lyap} requires 
more iterations than the Galerkin method applied to the matrix equation~\eqref{eqn:Lyap} to achieve the 
same prescribed accuracy.
 }
\end{num_example}

\begin{num_example}\label{Ex.4}
{\rm
We modify the data of Example \ref{Ex.1} by replacing $\lambda_{\min}(A)=1$ with
$\lambda_{\min}(A)=\lambda_1=0.001$, while the other eigenvalues are such that 
$\lambda_2=2,\ldots,\lambda_n=n$.
Here $b\in\mathbb{R}^n$ is the vector of all ones normalized.
The relative error energy norm obtained by CG and the Galerkin method is reported in the
left plot of Figure~\ref{Ex.4_Fig.1} for $n=100$. Notice that with such a $n$ we have
$\kappa = 10^5, \hat\kappa \approx 5\cdot 10^4$.

\begin{figure}\label{fig:lapl2D_rpk}
	\begin{center}
	\begin{tabular}{cc}
		{	\begin{tikzpicture}
  \begin{semilogyaxis}[width=0.46\linewidth, height=.38\textheight,
    legend pos = south west,
    xlabel = $k$, ylabel = Relative Error ($\mathcal{S}$-norm)]
    \addplot+[ mark = square, red] table[x index=0, y index=1]  {err_galerkin_referee_example.dat};
    \addplot+[ mark = o, blue] table[x index=0, y index=1]  {err_cg_referee_example.dat};
      \legend{Galerkin,CG };
  \end{semilogyaxis}          
\end{tikzpicture}
}
		&
		{	\begin{tikzpicture}
  \begin{semilogyaxis}
  [width=0.46\linewidth, height=.38\textheight,
    legend pos = north east, legend style={at={(0.6,0.70)},anchor=west},
    xlabel = $k$, ylabel = Ritz values]
    \addplot+[no marks, dashed, black] table[x index=0, y index=5]  {RitzValues.dat};
    \addplot+[no marks, dashed, black] table[x index=0, y index=6]  {RitzValues.dat};
    \addplot+[no marks, dashed, black] table[x index=0, y index=7]  {RitzValues.dat};
    \addplot+[no marks, dashed, black] table[x index=0, y index=8]  {RitzValues.dat};
    
    \addplot+[ mark = square, only marks, red] table[x index=0, y index=1]  {RitzValues.dat};
    \addplot+[ mark = square, only marks, red] table[x index=0, y index=2]  {RitzValues.dat};
    \addplot+[ mark = o, only marks, blue] table[x index=0, y index=3]  {RitzValues.dat};
    \addplot+[ mark = o, only marks, blue] table[x index=0, y index=4]  {RitzValues.dat};
      \legend{,,,,,Galerkin,CG };
    
  \end{semilogyaxis}          
\end{tikzpicture}
}
	\end{tabular}
	\end{center}
  \caption{Example~\ref{Ex.4}. Left: Relative error energy norms produced by the 
Galerkin and CG methods. Right: Maximum and minimum Ritz values computed at the $k$-th iteration of CG and the Galerkin method. The dashed black line represents the quantity to be approximated, namely $\lambda_{\min}(A)$, $\lambda_{\max}(A)$, $\lambda_{\min}(\mathcal{A})$, and $\lambda_{\max}(\mathcal{A})$.  \label{Ex.4_Fig.1}}
\end{figure}

Both methods stagnate in the initial phase of the solution process,
followed by a rapid convergence afterwards. The stagnation phase is significantly longer for CG,
contributing to the overall CG delay.
A closer look at the convergence history of the Ritz values towards the eigenvalues
of the corresponding coefficient matrices provides a better understanding.
In this setting, the Ritz values for the Galerkin
and CG methods are the eigenvalues of $V_k^TAV_k$ and of $Q_k^T {\cal A}Q_k$, respectively,
where the columns of $Q_k$ are the orthonormal basis of the space generated by CG (here we used the Arnoldi
procedure to compute $Q_k$).
%
Recalling that 
$\lambda_{\min}(A)=0.001$, $\lambda_{\max}(A)=100$ so that $\lambda_{\min}(\mathcal{A})=0.002$, $\lambda_{\max}(\mathcal{A})=200$, 
the right plot of Figure~\ref{Ex.4_Fig.1} reports the convergence history of the extreme
Ritz values computed at the $k$-th iteration of both CG and the Galerkin method for $k=1,\ldots,82$ (the dashed lines
indicate the target eigenvalues).
The Ritz value tending to the largest eigenvalue converges in very few iterations. 
For each approach, the Ritz value approximating
the smallest eigenvalue takes many more iterations to converge, and these 
iterations seem to match the stagnation phase observed in the left plot.
It appears that the matrix Galerkin approximation space is able to implicitly capture the
Kronecker structure of the eigenvector associated with $\lambda_{\min}({\cal A})$ much earlier than what CG can
do by using the unstructured basis $Q_k$. Once again, this emphasizes
the importance of the Kronecker basis determined by the matrix Galerkin method.

 }
\end{num_example}


\section{Petrov-Galerkin method and residual minimization}\label{Petrov-Galerkin methods and residual minimization}

Whenever the linear operator $\mathcal{S}$ is not spd, the Galerkin method does not necessarily lead to a minimization of the error norm. 
As in the linear system setting, a numerical procedure fulfilling an optimality condition can be obtained by imposing a Petrov-Galerkin condition on the residual also when solving linear matrix equations. For the case of Lyapunov and Sylvester equations, this strategy has been already explored in, e.g., \cite{Lin2013,Hu1992}, and in this section we are going to present some considerations about the application of Petrov-Galerkin methods to the solution of generic linear matrix equations of the form \eqref{eqn:matrixeq}.

We first recall the Petrov-Galerkin framework applied to the solution of the linear system~\eqref{eqn:mainkron}. If the columns of $\mathcal{V}_m\in\mathbb{R}^{N\times m}$ constitute an orthonormal basis for the selected trial space $\mathcal{K}_m$, we want to compute a solution $x_m=\mathcal{V}_my_m$,
where $y_m\in\mathbb{R}^m$ is calculated by imposing a Petrov-Galerkin condition on the residual vector $r_m=f-\mathcal{M}x_m$. In its full generality, such a condition reads
\begin{equation}\label{PetrovGalerkin1}
r_m\,\bot\,\mathcal{L}_m,\;\text{i.e.,}\; \mathcal{W}_m^Tr_m=0,\;\text{range}(\mathcal{W}_m)=\mathcal{L}_m, 
\end{equation}
where $\mathcal{L}_m$ is the chosen test space. See, e.g., \cite[Chapter 5]{Saad2003}. 

For the particular choice $\mathcal{L}_m=\mathcal{M}\mathcal{K}_m$, the 
condition in \eqref{PetrovGalerkin1} is equivalent to computing $x_m$ as the 
minimizer of the residual norm over $\mathcal{K}_m$, namely
$$x_m=\argmin_{x\in\mathcal{K}_m}\|f-\mathcal{M}x\|.$$
See, e.g., \cite[Proposition 5.3]{Saad2003}.
With the selection $\mathcal{K}_m=K_m(\mathcal{M},f)$, the minimization problem above can be significantly simplified by 
exploiting the Arnoldi relation; this is the foundation of some of the most popular minimal residual methods 
for linear systems such as, e.g., MINRES \cite{Paige1975} and GMRES \cite{Schultz1986}.

A similar approach can be pursued for the solution of linear matrix equations.
Indeed, let $N=n p$ and
consider $V_k\in\mathbb{R}^{n\times k}$, $W_k\in\mathbb{R}^{p\times k}$ with full column rank \footnote{Once again, the two matrices
may have different column dimensions, that is $V_{k_1}\in\mathbb{R}^{n\times k_1}$, $W_{k_2}\in\mathbb{R}^{p\times k_2}$. For
the sake of clarity in the exposition, we limit our presentation to the case $k_1=k=k_2$.}, 
and let
$\text{range}(V_k)$, $\text{range}(W_k)$, be the corresponding left and right approximation spaces. With 
$\mathcal{S}_\ell$ as in Definition~\ref{def:S}, 
we can formally set $\mathcal{K}_m=\text{range}(W_k\otimes V_k)$ and 
$\mathcal{L}_m=\mathcal{S}_\ell \mathcal{K}_m$.
An approximate solution in the form $X_k=V_kY_kW_k^T$, with $Y_k\in\mathbb{R}^{k\times k}$, can be
determined by imposing the condition
\eqref{PetrovGalerkin1} to the vector form of the residual matrix $R_k=\mathcal{S} (V_kY_kW_k^T)-F$.

Petrov-Galerkin methods for \eqref{eqn:matrixeq}  thus seek a solution $X_k=V_kY_kW_k^T$ by solving
$$\min_{x\in\text{range}(W_k\otimes V_k)}\|\text{vec}(F)-\mathcal{S}_\ell x\|_2= 
\min_{y\in\mathbb{R}^{k^2}}\|\text{vec}(F)-\mathcal{S}_\ell (W_k\otimes V_k)y\|_2,
$$
that is
\begin{equation}\label{PetrovGalerkin_matrixeq}
\min_{X=V_kYW_k^T}\|F-\mathcal{S}(X)\|_F=\min_{Y\in\mathbb{R}^{k\times k}}\|F-\mathcal{S}(V_kYW_k^T)\|_F. 
\end{equation}

In spite of their appealing minimization property, minimal residual methods are not very popular in the 
matrix equation literature. This is mainly due to the difficulty in dealing with the numerical solution of the
minimization problem~\eqref{PetrovGalerkin_matrixeq}. In general, one can apply an operator-oriented 
(preconditioned) CG method to the normal equations as 
\begin{equation}\label{normal_eq}
Y_k=\argmin_{Y\in\mathbb{R}^{k\times k}}\|F-\mathcal{S}(V_kYW_k^T)\|_F\qquad
\Leftrightarrow\qquad \mathcal{S}^*(F-\mathcal{S}(V_kY_kW_k^T))=0,
\end{equation}
where $\mathcal{S}^*$ is the adjoint of $\mathcal{S}$, namely 
$$\begin{array}{lrll}
  {\cal S}^* :& \mathbb{R}^{n\times p}&\rightarrow&\mathbb{R}^{n\times p}\\
  & X &\mapsto& \displaystyle\sum_{j=1}^\ell A_j^T X B_j^T.\\
  \end{array}
$$
If $\text{range}(V_k)$ and $\text{range}(W_k)$ are general spaces, the solution 
of \eqref{normal_eq} can be very expensive in terms of both computational time and memory requirements. 

In \cite{Lin2013}, the authors consider \eqref{normal_eq} in the case of the Lyapunov equation \eqref{eqn:Lyap} with $F$ low-rank and negative semidefinite.
In particular, if $F=-bb^T,$ $b\in\mathbb{R}^{n\times q}$, $q\ll n$, they employ
the approximation spaces
$\text{range}(V_k)=\text{range}(W_k)$ such that 
$b=V_1L_b$ for some $L_b\in\mathbb{R}^{q\times q}$, $q=\text{rank}(C)$, and satisfying an Arnoldi-like relation of the form
$$AV_k=[V_k,\breve{V}_{k+1}]\underline{H}_k,$$
for $[V_k,\breve{V}_{k+1}]\in\mathbb{R}^{n\times (k+1)q}$ having orthonormal columns and $\underline{H}_k\in\mathbb{R}
^{(k+1)q\times kq}$.
In this case, the minimization problem \eqref{PetrovGalerkin_matrixeq} can be written as 
\begin{equation}\label{minimal_residual}
 Y_k=\argmin_{Y\in\mathbb{R}^{kq\times kq}}\left\|\underline{H}_kY[I_{kq},0]+\begin{bmatrix}
                 I_{kq} \\                                                  
                  0\\                                                 \end{bmatrix}Y\underline{H}_k^T+\begin{bmatrix}
                  L_bL_b^T & 0 \\
                  0 & 0 \\
                  \end{bmatrix}
\right\|_F,
\end{equation}
and three different methods for its solution are illustrated. 

If the coefficient matrix $A$ in \eqref{eqn:Lyap} is stable (antistable) and $F$ is symmetric negative semidefinite, the exact solution $X$ is symmetric positive (negative) semidefinite. See, e.g., \cite{Snyders1970}.
However, as reported in \cite{Lin2013}, the numerical solution $X_k=V_kY_kV_k^T$ is not guaranteed to be semidefinite if $Y_k$ is computed as in \eqref{minimal_residual}.

In \cite[Section 3.4]{Lin2013} it is shown that \eqref{minimal_residual}
 is equivalent to computing $Y_k$ as the solution of the \emph{generalized} Sylvester equation
 \begin{equation}\label{gen_Lyap}
\underline{H}_k^T\underline{H}_kY+Y\underline{H}_k^T\underline{H}_k+H_kYH_k+H_k^TYH_K^T+D=0,  
 \end{equation}
 where 
$$
D:=\underline{H}_k^T
\begin{bmatrix}
                  L_bL_b^T & 0 \\
                  0 & 0 \\
                  \end{bmatrix}\begin{bmatrix}
                 I_{kq} \\                                                  
                  0\\                                                 
\end{bmatrix}
 +[I_{kq},0]\begin{bmatrix}
                  L_bL_b^T & 0 \\
                  0 & 0 \\
                  \end{bmatrix}
 \underline{H}_k= H_k^T\begin{bmatrix}
                  L_bL_b^T & 0 \\
                  0 & 0 \\
                  \end{bmatrix}+\begin{bmatrix}
                  L_bL_b^T & 0 \\
                  0 & 0 \\
                  \end{bmatrix}H_k ,
$$
so that $D$ is symmetric but indefinite.
 This is one of the main obstacles in proving the semidefiniteness of $Y_k$ through the matrix formulation \eqref{gen_Lyap}. 
Without further hypotheses, the symmetric matrix $Y_k$ solving \eqref{gen_Lyap} is indefinite in general,
thus preventing $Y_k$ from preserving the semidefiniteness property of the solution
to be approximated.

From a computational viewpoint, if resorting to a Kronecker form is excluded, 
the generalized Sylvester equation \eqref{gen_Lyap} can be solved by means of the methods described in \cite{Lin2013} and its references. In addition,
setting
$\mathfrak{L}(Z)=\underline{H}_k^T\underline{H}_kZ+Z\underline{H}_k^T\underline{H}_k$ and 
$\mathfrak{N}(Z) =H_kZH_k+H_k^TZH_k^T$, fixed point iterations can be used
whenever the spectral radius of the
operator $\mathfrak{L}^{-1}(\mathfrak{N}(\cdot))$ is less than one; 
see, e.g., \cite{Damm.08,Shank2016,Jarlebring2018} for various implementations.


\subsection{A constrained residual minimization approach for Lyapunov equations}
To cope with the lack of semidefiniteness in the least squares problem approach,
we propose to explicitly impose the semidefiniteness as a constraint. For instance, if a negative semidefinite solution is sought, the problem
becomes
\begin{equation}\label{eqn:constr_lsqr}
 Y_k=\argmin_{Y\in\mathbb{R}^{kq\times kq}\atop Y \leq 0}
\left\|\underline{H}_kY[I_{kq},0]+
\begin{bmatrix}
I_{kq} \\                                                  
0\\                                                 
\end{bmatrix}
Y\underline{H}_k^T+
\begin{bmatrix}
L_bL_b^T & 0 \\
0 & 0 \\
\end{bmatrix}
\right\|_F.
\end{equation}
To numerically solve this inequality constrained least squares problem, we
consider a linear matrix inequalities (LMI) approach, which suits very well
the matrix equation framework \cite{BEFB94,Skeltonetal.98}; other general purpose methods  could also
be considered \cite{Anjos2012,Malick2004}.

%
%

In the LMI context, (\ref{eqn:constr_lsqr}) can be stated as the following
semidefiniteness matrix inequalities
$$
Y \leq 0, \qquad 
\begin{bmatrix}
I & {\rm vec}(\underline{H}_k Y J^T + J Y \underline{H}_k^T + M) \\
{\rm vec}(\underline{H}_k Y J^T + J Y \underline{H}_k^T + M)^T & \gamma 
\end{bmatrix} 
\ge 0,
$$
for the unknown matrix $Y$ and scalar $\gamma >0$; here $J=[I_{kq};0]$ and $M = [L_bL_b^T,0;0, 0]$.

\begin{num_example}\label{Ex.2}
{\rm
We consider the Lyapunov equation \eqref{eqn:Lyap}  with $A=QD Q^{-1}\in\mathbb{R}^{n\times n}$, $D$ as in Example~\ref{Ex.1}, $Q$ a random matrix, and $F=-bb^T$, where  
$b\in\mathbb{R}^n$ is a random vector with unit norm.

Since $A$ is antistable and the right-hand side is symmetric negative semidefinite, the solution $X$ is symmetric negative semidefinite and we thus expect the approximate solution $X_k=V_kY_kV_k^T$ to be so as well.

We apply the Petrov-Galerkin method discussed in this section in the solution process and we adopt the Krylov subspace as approximation space, i.e., $\text{range}(V_k)=K_k(A,b)$. The matrix $Y_k$ is computed in two different ways. We first solve the unconstrained minimization problem \eqref{minimal_residual} getting the matrix  $Y_k^{\text{uncon}}$. In particular, $Y_k^{\text{uncon}}$ is computed by applying a (preconditioned) CG method to the matrix equation~\eqref{gen_Lyap}. See, e.g., \cite{Lin2013}. Then, we compute $Y_k^{\text{const}}$ by solving the constrained minimization problem \eqref{eqn:constr_lsqr}. 
The Petrov-Galerkin method is stopped as soon as the relative residual norm becomes smaller than $10^{-6}$.

%
%
%
%

\begin{figure}
\centering 
\caption{Example~\ref{Ex.2}. Intervals  $[\min_j\{\lambda_j(Y_k^{\text{uncon}})\geq 0\},\max_j\{\lambda_j(Y_k^{\text{uncon}})\geq 0\}]$
for all $k=1,\ldots,68$. $n=1000$. 
  } \label{fig:3}
\begin{tikzpicture}
    \begin{semilogyaxis}[  legend pos = south west,xmin=1,xmax=70,xscale=2, xlabel = Iterations, nodes near coords={
    \rotatebox{0}{%
    \small
    \pgfmathprintnumber[fixed,precision=0,zerofill]\pgfplotspointmeta
    }}]
      \addplot+ [scatter,
        only marks,error bar legend 1,
        scatter src=explicit symbolic,
        scatter/classes={
          A0={mark=none,red,thick
          }},
        error bars/.cd,
        y dir=both,
        y explicit, 
        error bar style={color=red,dotted,mark=square*}]
      table[x=x,y=y,y error=err,meta=class] {eig0_10.dat};
    \addplot+ [scatter,error bar legend 2,
        only marks,
        scatter src=explicit symbolic,
        scatter/classes={
          A0={mark=none,blue,
          }},
        error bars/.cd,
        y dir=both,
        y explicit, 
        error bar style={color=blue}]
      table[x=x,y=y,y error=err,meta=class] {eig11_20.dat};
      \addplot [scatter,error bar legend 3,
        only marks,
        scatter src=explicit symbolic,
        scatter/classes={
          A0={mark=none,black,
          }},
        error bars/.cd,
        y dir=both,
        y explicit, 
        error bar style={color=black,dashed}]
      table[x=x,y=y,y error=err,meta=class] {eig21_30.dat};
      \legend{\#$\{\lambda_j\geq 0\}\in [0\text{,}10]$,\#$\{\lambda_j\geq 0\}\in [11\text{,}20]$,\#$\{\lambda_j\geq 0\}\in [21\text{,}26]$};  
    \end{semilogyaxis}
\end{tikzpicture}
\end{figure}
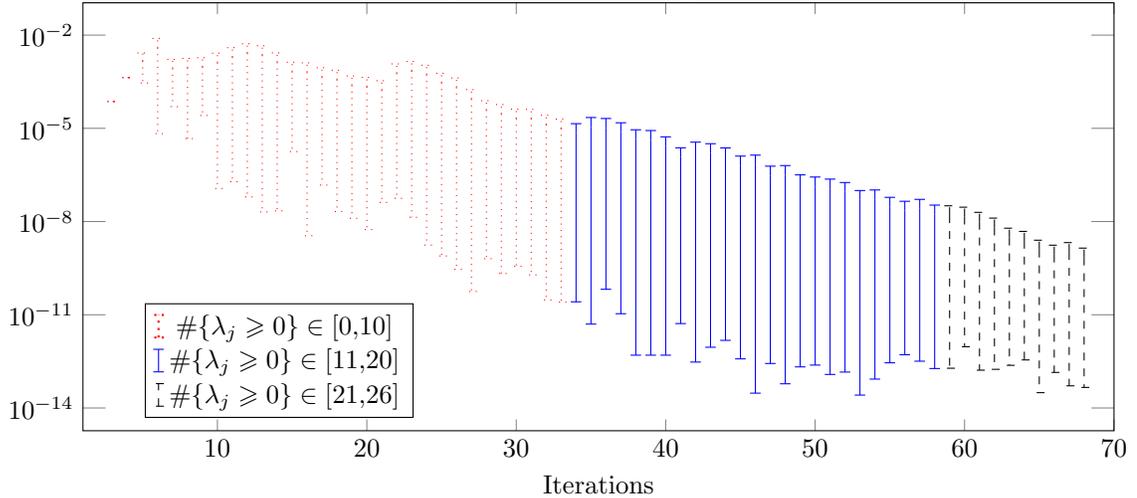

In Figure~\ref{fig:3} we plot the intervals  
$[\min_j\{\lambda_j(Y_k^{\text{uncon}})\geq 0\},\max_j\{\lambda_j(Y_k^{\text{uncon}})\geq 0\}]$
of the undesired positive eigenvalues of $Y_k^{\text{uncon}}$
for all $k$ for the case $n=1000$.
For $k=1,2$, $Y_k^{\text{uncon}}$ has all negative eigenvalues,
while it starts being indefinite for $k\geq3$ so that $X_k^{\text{uncon}}=V_kY_k^{\text{uncon}}V_k^T$ 
is an indefinite approximation to the negative semidefinite $X$. Nonetheless,
for $k=68$, the (undesired) positive eigenvalues of $X_k^{\text{uncon}}$
are small enough so as to still allow a sufficiently accurate approximation,
in terms of relative residual norm. 
On the other hand, this problem is not encountered with $Y_k^{\text{constr}}$,
thanks to the explicit negative semidefiniteness constraint in the formulation \eqref{eqn:constr_lsqr}.

From the legend of Figure~\ref{fig:3}, we can see that the number of positive eigenvalues of $Y_k^{\text{uncon}}$ increases as the iteration proceed, even though they diminish in magnitude. The latter trend is not surprising. Indeed, even if $Y_k^{\text{uncon}}$ is computed by \eqref{minimal_residual}, the Petrov-Galerkin method is converging 
towards the negative semidefinite solution $X$ and, for an approximation space spanning the whole
$\RR^n$, the method would retrieve the exact solution, regardless of the minimization problem \eqref{minimal_residual}.

 
 We would like to point out that both tested variants of the Petrov-Galerkin method 
needed 68 iterations to converge and the actual values of the residual norm provided 
by \eqref{minimal_residual} and \eqref{eqn:constr_lsqr} were always very similar to each other, 
during the whole convergence history. This phenomenon surely deserves further studies as, 
in principle, \eqref{eqn:constr_lsqr} leads to a residual norm that is greater or 
equal than the one provided by \eqref{minimal_residual}, while the two solutions (constrained
and unconstrained) do not necessarily have to be close to each other.
 
 }
\end{num_example}

In our computational experiments, we have used the Yalmip software 
\cite{Lofberg2004} running the algorithm Sedumi in Matlab \cite{Sedumi}.
This algorithm is rather expensive and computing the solution $Y_k$ to 
\eqref{eqn:constr_lsqr} at each Krylov iteration $k$ often leads to a very time 
consuming solution procedure. We think this issue can be fixed in different ways. For instance, 
one may compute $Y_k$, and thus check the residual norm, only periodically, say every 
$d\geq1$ iterations. Moreover, the explicit solution $Y_k$ is required only at convergence 
while we just need the value of the residual norm during the Krylov routine. It may be possible to compute such a residual norm without calculating the whole $Y_k$ as it is done in \cite{Palitta2018} for the Galerkin method and in \cite{Lin2013} for the Petrov-Galerkin technique equipped with the unconstrained minimization problem \eqref{minimal_residual}.

The study of the aforementioned enhancements and, more in general, the employment 
of constrained minimization procedures in the solution of linear matrix equations 
will be the topic of future research.


\section{Conclusions}\label{Conclusions}
We have shown that the optimality properties of Galerkin 
and Petrov-Galerkin methods naturally extend to the general linear matrix equation setting.
Such features do not depend on the adopted approximation spaces even though, in actual computations,
fast convergence depends on the suitable subspace selection.
Identifying effective subspaces for general (multiterm) linear matrix equations depends on the problem
at hand, and it may seem easier to recast the solution in terms of a large 
vector linear system. 
On the other hand, the vector form can be extremely memory consuming, while
 the vector linear system encodes some spectral redundancy which may cause a delay in the converge of the adopted iterative solution scheme.

Petrov-Galerkin schemes require to solve a matrix minimization problem at each iteration and we have suggested to explicitly incorporate a semidefiniteness constraint in its formulation. To the best of our knowledge, such approach has never been proposed in the literature and the employment of constrained optimization techniques in the context of Petrov-Galerkin methods for linear matrix equations opens many new research directions.

\section*{Acknowledgements}
 Both authors are members of the Italian INdAM Research group GNCS.
 
 We thank the two anonymous reviewers for their insightful remarks. 
\bibliography{galerkin}


\end{document}